\numberwithin{equation}{section}
\newtheorem{theorem}{\bf Theorem}[section]
\newtheorem{remark}[theorem]{\bf Remark}
\newtheorem{lemma}[theorem]{\bf Lemma}
\newtheorem{corollary}[theorem]{\bf Corollary}
\newskip\plaincentering \plaincentering=0pt plus 1000pt minus 1000pt
\def\@plainlign{\tabskip=0pt\everycr={}}
\def\eqalignno#1{\displ@y \tabskip\plaincentering
  \halign to\displaywidth{\hfil$\@lign\displaystyle{##}$\tabskip\z@skip
    &$\@lign\displaystyle{{}##}$\hfil\tabskip\plaincentering
    &\llap{$\@lign##$}\tabskip\z@skip\crcr
    #1\crcr}}
\def\leqalignno#1{\displ@y \tabskip\plaincentering
  \halign to\displaywidth{\hfil$\@lign\displaystyle{##}$\tabskip\z@skip
    &$\@lign\displaystyle{{}##}$\hfil\tabskip\plaincentering
    &\kern-\displaywidth\rlap{$\@lign##$}\tabskip\displaywidth\crcr
    #1\crcr}}
\def\plainLet@{\relax\iffalse{\fi\let\\=\cr\iffalse}\fi}
\def\plainvspace@{\def\vspace##1{\noalign{\vskip##1}}}
\def\intic@{\mathchoice{\hskip5\p@}{\hskip4\p@}{\hskip4\p@}{\hskip4\p@}}
\def\negintic@
\def\intkern@{\mathchoice{\!\!\!}{\!\!}{\!\!}{\!\!}}
\def\intdots@{\mathchoice{\cdots}{{\cdotp}\mkern1.5mu
    {\cdotp}\mkern1.5mu{\cdotp}}{{\cdotp}\mkern1mu{\cdotp}\mkern1mu
      {\cdotp}}{{\cdotp}\mkern1mu{\cdotp}\mkern1mu{\cdotp}}}
\def\iint{\intno@=\tw@\futurelet\next\ints@}
\def\iiint{\intno@=\thr@@\futurelet\next\ints@}
\def\iiiint{\intno@=4 \futurelet\next\ints@}
\def\idotsint{\intno@=\z@\futurelet\next\ints@}
\def\ints@{\findlimits@\ints@@}
\newif\iflimtoken@
\newif\iflimits@
\def\findlimits@{\limtoken@false\limits@false\ifx\next\limits
 \limtoken@true\limits@true\else\ifx\next\nolimits\limtoken@true\limits@false
    \fi\fi}
\def\multintlimits@{\intop\ifnum\intno@=\z@\intdots@
  \else\intkern@\fi
    \ifnum\intno@>\tw@\intop\intkern@\fi
     \ifnum\intno@>\thr@@\intop\intkern@\fi\intop}
\def\multint@{\int\ifnum\intno@=\z@\intdots@\else\intkern@\fi
   \ifnum\intno@>\tw@\int\intkern@\fi
    \ifnum\intno@>\thr@@\int\intkern@\fi\int}
\def\ints@@{\iflimtoken@\def\ints@@@{\iflimits@
   \negintic@\mathop{\intic@\multintlimits@}\limits\else
    \multint@\nolimits\fi\eat@}\else
     \def\ints@@@{\multint@\nolimits}\fi\ints@@@}
\def\Sb{_\bgroup\vspace@
        \baselineskip=\fontdimen10 \scriptfont\tw@
        \advance\baselineskip by \fontdimen12 \scriptfont\tw@
        \lineskip=\thr@@\fontdimen8 \scriptfont\thr@@
        \lineskiplimit=\thr@@\fontdimen8 \scriptfont\thr@@
        \Let@\vbox\bgroup\halign\bgroup \hfil$\scriptstyle
            {##}$\hfil\cr}
\def\endSb{\crcr\egroup\egroup\egroup}
\def\Sp{^\bgroup\vspace@
        \baselineskip=\fontdimen10 \scriptfont\tw@
        \advance\baselineskip by \fontdimen12 \scriptfont\tw@
        \lineskip=\thr@@\fontdimen8 \scriptfont\thr@@
        \lineskiplimit=\thr@@\fontdimen8 \scriptfont\thr@@
        \Let@\vbox\bgroup\halign\bgroup \hfil$\scriptstyle
            {##}$\hfil\cr}
\def\endSp{\crcr\egroup\egroup\egroup}
\def\Let@{\relax\iffalse{\fi\let\\=\cr\iffalse}\fi}
\def\vspace@{\def\vspace##1{\noalign{\vskip##1 }}}
\def\aligned{\,\vcenter\bgroup\plainvspace@\plainLet@\openup\jot\m@th\ialign
  \bgroup \strut\hfil$\displaystyle{##}$&$\displaystyle{{}##}$\hfil\crcr}
\def\endaligned{\crcr\egroup\egroup}
\def\matrix{\,\vcenter\bgroup\plainLet@\plainvspace@
    \normalbaselines
  \m@th\ialign\bgroup\hfil$##$\hfil&&\quad\hfil$##$\hfil\crcr
    \mathstrut\crcr\noalign{\kern-\baselineskip}}
\def\endmatrix{\crcr\mathstrut\crcr\noalign{\kern-\baselineskip}\egroup
                \egroup\,}
\newtoks\hashtoks@
\def\format{\crcr\egroup\iffalse{\fi\ifnum`}=0 \fi\format@}
\def\format@#1\\{\def\preamble@{#1}%
  \def\c{\hfil$\the\hashtoks@$\hfil}%
  \def\r{\hfil$\the\hashtoks@$}%
  \def\l{$\the\hashtoks@$\hfil}%
  \setbox\z@=\hbox{\xdef\Preamble@{\preamble@}}\ifnum`{=0 \fi\iffalse}\fi
   \ialign\bgroup\span\Preamble@\crcr}
\def\cases{\left\{\,\vcenter\bgroup\plainvspace@
     \normalbaselines\openup\jot\m@th
      \plainLet@\ialign\bgroup$\displaystyle{##}$\hfil&\quad$\displaystyle{{}##}$\hfil\crcr
      \mathstrut\crcr\noalign{\kern-\baselineskip}}
\newif\iftagsleft@
\def\TagsOnRight{\global\tagsleft@false}
\def\tag#1$${\iftagsleft@\leqno\else\eqno\fi
 \hbox{\def\pagebreak{\global\postdisplaypenalty-\@M}%
 \def\nopagebreak{\global\postdisplaypenalty\@M}\rm(#1\unskip)}%
  $$\postdisplaypenalty\z@\ignorespaces}
\def\plainallowdisplaybreak@{\def\allowdisplaybreak{\noalign{\allowbreak}}}
\def\plaindisplaybreak@{\def\displaybreak{\noalign{\break}}}
\def\align#1\endalign{\def\tag{&}\plainvspace@\plainallowdisplaybreak@\plaindisplaybreak@
  \iftagsleft@\plainlalign@#1\endalign\else
   \plainralign@#1\endalign\fi}
\def\plainralign@#1\endalign{\displ@y\plainLet@\tabskip\plaincentering\halign to\displaywidth
     {\hfil$\displaystyle{##}$\tabskip=\z@&$\displaystyle{{}##}$\hfil
       \tabskip=\plaincentering&\llap{\hbox{\rm(##\unskip)}}\tabskip\z@\crcr
             #1\crcr}}
\def\plainlalign@
\def\re@#1{\par\hangindent\parindent\indent\llap{#1\enspace}\ignorespaces}
\def\qfootnote#1{\edef\@sf{\spacefactor\the\spacefactor}{}#1\@sf
      \insert\footins{\let\egroup=}\footnotesize 
      \interlinepenalty100 \let\par=\endgraf
        \leftskip=0pt \rightskip=0pt
        \splittopskip=10pt plus 1pt minus 1pt \floatingpenalty=20000
   \smallskip\re@{#1}\bgroup\strut\aftergroup{\strut\egroup}\let\next}
\begin{document}
\title{\bf Universal bounds on eigenvalues of the buckling problem on spherical domains
\footnote{This research is supported by NSFC of China (No.
10671181), Project of Henan Provincial department of Sciences and
Technology (No. 092300410143), and NSF of Henan Provincial Education
department (No. 2009A110010).}}
\author{Guangyue Huang,\ Xingxiao Li
\footnote{The corresponding author. Email: xxl$@$henannu.edu.cn}\, , \  Linfen Cao\\
{\normalsize Department of Mathematics, Henan Normal University}
\\{\normalsize Xinxiang 453007, Henan, P.R. China}}
\date{}
\maketitle
\begin{quotation}
\noindent{\bf Abstract.} In this paper we study the eigenvalues of
buckling problem on domains in a unit sphere. By introducing a new
parameter and using Cauchy inequality, we optimize the
inequality obtained by Wang and Xia in \cite{wangxia07}.\\
{{\bf Keywords}: eigenvalue, universal bounds, buckling problem.} \\
{{\bf Mathematics Subject Classification}: Primary 35P15, Secondary
58G25.}

\end{quotation}

\section{Introduction}

Let $\Omega$ be a connected bounded domain in an
$n(\geq2)$-dimensional Euclidean space $\mathbb{R}^n$ and $\nu$ be
the unit outward normal vector field of $\partial\Omega$. The
well-known eigenvalue problem
\begin{equation}\label{Intro1}\left\{\begin{array}{ll}
\Delta^2 u=\Lambda(-\Delta) u & {\rm in}\ \Omega,\\
u=\frac{\partial u}{\partial\nu}=0 & {\rm on}\
\partial\Omega
\end{array}\right.
\end{equation}
is called a {\em buckling problem}, which is used to describe the
critical buckling load of a clamped plate subjected to a uniform
compressive force around its boundary.

Let $$0<\Lambda_1\leq\Lambda_2\leq \Lambda_3\leq \cdots$$ denote the
successive eigenvalues for \eqref{Intro1}, where each eigenvalue is
repeated according to its multiplicity. In 1956,
Payne-P\'{o}lya-Weinberger \cite{ppw2} proved
\begin{equation}\label{Intro2}
\Lambda_2\leq 3\Lambda_1\ \ \ {\rm for}\ \Omega\in \mathbb{R}^2.
\end{equation} Following the method of Payne-P\'{o}lya-Weinberger in \cite{ppw2},
it reads that for $\Omega\in\mathbb{R}^n$ (for the generalization of
\eqref{Intro2} to $n$ dimensions, see \cite{ash04}):
$$\Lambda_2\leq \left(1+\frac8{n+2}\right)\Lambda_1.
$$
Subsequently, in 1984, Hile and Yeh \cite{hile1} improved the above
inequality as follows:
$$
\Lambda_2\leq \frac{n^2+8n+20}{(n+2)^2}\Lambda_1.
$$
In 1998, Ashbaugh \cite{ash} obtained
$$\sum_{i=1}^n\Lambda_{i+1}\leq (n+4)\Lambda_1.$$
In a recent survey paper, answering a question of Ashbaugh in
\cite{ash}, Cheng-Yang \cite{chengyang06} proved the following
universal inequalities on eigenvalues for the eigenvalue problem
\eqref{Intro1}:
$$
\sum_{i=1}^k(\Lambda_{k+1}-\Lambda_i)^2\leq\frac{4(n+2)}{
n^2}\sum_{i=1}^k(\Lambda_{k+1}-\Lambda_i)\Lambda_i. $$

It has become clear that many similar inequalities on eigenvalues of
Laplacian of Payne-P\'{o}lya-Weinberger rely on facts involving
operators, their commutators, and traces. For the related research
and improvement in this direction, see
\cite{harrell97,huang09,levitin,huangchen,harrell,wang07,cheng4} and
the references therein.

Let $x_1,x_2,\ldots,x_{n+1}$ be the standard Euclidean coordinate
functions of $\mathbb{R}^{n+1}$. Then the unit sphere is defined by
$$\mathbb{S}^n=\left\{ (x_1,x_2,\ldots, x_{n+1})\in\mathbb{R}^{n+1}\ ;\
\sum_{\alpha=1}^{n+1}x_\alpha^2=1\right\}.$$ In 2007, Wang and Xia
\cite{wangxia07} considered the buckling problem on domains in a
unit sphere $\mathbb{S}^n$ and obtained the following result:

\begin{theorem}{\rm\cite{wangxia07}} Let $\Lambda_i$ be the
$i^{th}$ eigenvalue of the following eigenvalue problem:
$$\Delta^2 u=\Lambda(-\Delta) u \ \ {\rm in}\ \Omega,\ \ \ \
u=\frac{\partial u}{\partial\nu}=0\ \ {\rm on}\
\partial\Omega,$$ where $\Omega$ is a connected domain in
a unit sphere $\mathbb{S}^n(n\geq2)$ with smooth boundary
$\partial\Omega$ and $\nu$ is the unit outward normal vector field
of $\partial\Omega$. Then for any $\delta>0$,
\begin{equation}\label{Intro3}
\aligned 2\sum_{i=1}^k(\Lambda_{k+1}-\Lambda_i)^2\leq&
\sum_{i=1}^k(\Lambda_{k+1}-\Lambda_i)^2\left(\delta\Lambda_i
+\frac{\delta^2(\Lambda_i-(n-2))}{4(\delta\Lambda_i+n-2)}\right)\\
&+
\frac{1}{\delta}\sum_{i=1}^k(\Lambda_{k+1}-\Lambda_i)\left(\Lambda_i
+\frac{(n-2)^2}{4}\right).
\endaligned
\end{equation}
\end{theorem}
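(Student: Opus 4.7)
The plan is to adapt the Payne-P\'olya-Weinberger commutator technique to the spherical setting. Let $u_1,\dots,u_k$ denote eigenfunctions of \eqref{Intro1} associated with $\Lambda_1,\dots,\Lambda_k$, orthonormalized in the inner product $(\phi,\psi)=\int_\Omega\nabla\phi\cdot\nabla\psi$, and let $x_1,\dots,x_{n+1}$ denote the ambient coordinates of $\mathbb{R}^{n+1}$ restricted to $\mathbb{S}^n$. I would form trial functions
\[
\varphi_{\alpha i} = x_\alpha u_i - \sum_{j=1}^{k} r_{\alpha ij}\,u_j,
\]
with scalars $r_{\alpha ij}$ chosen so that each $\varphi_{\alpha i}$ is $H^1_0$-orthogonal to all $u_j$. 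Since $\varphi_{\alpha i}$ and its normal derivative vanish on $\partial\Omega$, the Rayleigh-Ritz characterization of $\Lambda_{k+1}$ yields
\[
\Lambda_{k+1}\int_\Omega |\nabla\varphi_{\alpha i}|^2 \leq \int_\Omega (\Delta\varphi_{\alpha i})^2.
\]

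Next I would expand both sides using the commutator $[\Delta,x_\alpha]u_i = -n\,x_\alpha u_i + 2\langle\nabla x_\alpha,\nabla u_i\rangle$ together with the four basic spherical identities $\Delta x_\alpha = -n\,x_\alpha$, $\sum_\alpha x_\alpha^2 = 1$, $\sum_\alpha x_\alpha\nabla x_\alpha = 0$ and $\sum_\alpha |\nabla x_\alpha|^2 = n$. Multiplying the inequality above by $(\Lambda_{k+1}-\Lambda_i)^2$, summing over $\alpha=1,\dots,n+1$ and $i=1,\dots,k$, and repeatedly integrating by parts (with the vanishing of $u_i$ and $\partial u_i/\partial\nu$ killing boundary terms), the left-hand side collapses to $2\sum_{i=1}^k(\Lambda_{k+1}-\Lambda_i)^2$, while the right-hand side generates a cross-term of the schematic form $\sum_{\alpha,i}(\Lambda_{k+1}-\Lambda_i)^2\int_\Omega \langle\nabla x_\alpha,\nabla u_i\rangle\,G_{\alpha i}$, where $G_{\alpha i}$ packages $\Delta(x_\alpha u_i)$ and the eigenequation.

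The third step applies the weighted Cauchy inequality $2ab\leq\delta a^2+\delta^{-1}b^2$ to this cross term, with the free parameter $\delta>0$; this produces precisely the two sums appearing on the right-hand side of \eqref{Intro3}, with the factor $\delta$ in front of the first and $\delta^{-1}$ in front of the second. The shift by $n-2$, the constant $(n-2)^2/4$, and the rational coefficient $\delta^2(\Lambda_i-(n-2))/\bigl(4(\delta\Lambda_i+n-2)\bigr)$ all arise from a completion-of-the-square argument applied to a quadratic-in-$\delta$ expression, combined with the identity $\int_\Omega\Delta u_i\cdot\Delta u_j=\Lambda_i\,\delta_{ij}$ (obtained from $\Delta^2u_i=\Lambda_i(-\Delta)u_i$ integrated against $u_j$) and the Bochner formula on $\mathbb{S}^n$, which contributes the Ricci term.

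The main obstacle will be the delicate algebraic bookkeeping required to track how the sphere's curvature enters each step. In particular, the Ricci contribution $n-1$ from Bochner, the factor $n$ from $\Delta x_\alpha$, and the residues of the several integrations by parts must conspire to produce exactly the shift $n-2$ rather than $n-1$ or $n$. Once these identities are assembled and the parameter $\delta$ is retained as a free variable instead of being optimized prematurely, the inequality \eqref{Intro3} follows; the flexibility of $\delta$ is precisely what allows the present paper to further tighten Wang-Xia's result.
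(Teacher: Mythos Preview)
Your trial-function choice is not the one underlying Wang--Xia's inequality, and this is not a cosmetic difference. The paper (quoting \cite{wangxia07}, who in turn follow Cheng--Yang \cite{chengyang06}) does \emph{not} use $\varphi_{\alpha i}=x_\alpha u_i-\sum_j r_{\alpha ij}u_j$. Instead one decomposes the vector field
\[
x_\alpha\nabla u_i=\nabla h_{\alpha i}+W_{\alpha i},
\]
where $\nabla h_{\alpha i}$ is the orthogonal projection of $x_\alpha\nabla u_i$ onto the space of gradients $\mathbf{H}^2_{1,D}(\Omega)$ and $W_{\alpha i}\perp\mathbf{H}^2_{1,D}(\Omega)$; the function $h_{\alpha i}$ (after removing its components along $u_1,\dots,u_k$) is the actual trial function. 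The Pythagorean identity $\|x_\alpha\nabla u_i\|^2=\|\nabla h_{\alpha i}\|^2+\|W_{\alpha i}\|^2$ is then the engine of the argument: a term $(\delta\Lambda_i+n-2)\|W_{\alpha i}\|^2$ appears on the right of the basic estimate, a Cauchy step manufactures a matching $(\delta\Lambda_i+n-2)\|\nabla h_{\alpha i}\|^2$, and the two recombine into $(\delta\Lambda_i+n-2)\|x_\alpha\nabla u_i\|^2$. It is exactly this recombination, together with $\sum_\alpha\|\nabla\langle\nabla x_\alpha,\nabla u_i\rangle\|^2=\Lambda_i-(n-2)$, that produces the rational coefficient $\dfrac{\delta^{2}(\Lambda_i-(n-2))}{4(\delta\Lambda_i+n-2)}$ in \eqref{Intro3}.

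With the scalar trial function $x_\alpha u_i$ there is no $W_{\alpha i}$, no orthogonal splitting of $x_\alpha\nabla u_i$, and hence no mechanism to generate the denominator $\delta\Lambda_i+n-2$; your ``completion of the square'' remark is too vague to supply it. The quantities that really drive the computation---$r_{\alpha i}$, $p_{\alpha i}$, $Z_{\alpha i}$ and the identities $\sum_\alpha r_{\alpha i}=n$, $\sum_\alpha p_{\alpha i}=0$, $\sum_\alpha\|Z_{\alpha i}\|^2=\Lambda_i+\tfrac{(n-2)^2}{4}$---are all built from $x_\alpha\nabla u_i$ and $\langle\nabla x_\alpha,\nabla u_i\rangle$, not from $x_\alpha u_i$. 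So the missing idea is the Cheng--Yang vector-field decomposition specific to the buckling problem; without it your outline will not arrive at \eqref{Intro3} with the stated constants.
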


We remark that the right hand side of inequality \eqref{Intro3}
depends on $\delta$. In the current paper, by introducing a new
parameter and using Cauchy inequality, we obtain a stronger
inequality than \eqref{Intro3} which is independent of $\delta$, and
derive an inequality of the type of Yang (see inequality
\eqref{Intro5}). Our main results are stated as follows:

\begin{theorem}\label{th:main} Let $\Lambda_i$ be the $i^{th}$ eigenvalue
of the following eigenvalue problem:
$$\Delta^2 u=\Lambda(-\Delta) u \ \ {\rm in}\ \Omega,\ \ \ \
u=\frac{\partial u}{\partial\nu}=0\ \ {\rm on}\
\partial\Omega,$$ where $\Omega$ is a connected domain in
a unit sphere $\mathbb{S}^n(n\geq2)$ with smooth boundary
$\partial\Omega$ and $\nu$ is the unit outward normal vector field
of $\partial\Omega$. Then
\begin{alignat}{1}\label{Intro4}
&\sum_{i=1}^k(\Lambda_{k+1}-\Lambda_i)^2
\left(2+\frac{n-2}{\Lambda_i-(n-2)}\right)\\
\leq&
2\left\{\sum_{i=1}^k(\Lambda_{k+1}-\Lambda_i)^2\left(\Lambda_i
-\frac{n-2}{\Lambda_i-(n-2)}\right)\right\}^{1/2}\\
&\qquad \qquad\qquad
\times\left\{\sum_{i=1}^k(\Lambda_{k+1}-\Lambda_i)
\left(\Lambda_i+\frac{(n-2)^2}{4}\right)\right\}^{1/2}.
\end{alignat}
\end{theorem}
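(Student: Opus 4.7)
The strategy is to refine the derivation of \cite{wangxia07} by introducing an auxiliary parameter that eliminates the $\delta$-dependent middle term of \eqref{Intro3}, and then to apply a single AM--GM optimisation to obtain \eqref{Intro4}. Following the Wang--Xia framework, I would normalise the buckling eigenfunctions $u_i$ by $\int_\Omega\nabla u_i\cdot\nabla u_j=\delta_{ij}$, use the Euclidean coordinate functions $x_1,\ldots,x_{n+1}$ restricted to $\mathbb{S}^n$, and form the trial functions
\[
\phi_{i\alpha}=x_\alpha u_i-\sum_{j=1}^k a_{ij}^\alpha u_j,\qquad 1\leq i\leq k,\ 1\leq\alpha\leq n+1,
\]
with constants $a_{ij}^\alpha=\int_\Omega\nabla(x_\alpha u_i)\cdot\nabla u_j$, chosen so that $\int_\Omega\nabla\phi_{i\alpha}\cdot\nabla u_j=0$ for $j=1,\ldots,k$. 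The boundary conditions $u_j|_{\partial\Omega}=\partial u_j/\partial\nu|_{\partial\Omega}=0$ make $\phi_{i\alpha}$ admissible in the Rayleigh characterisation of $\Lambda_{k+1}$, giving $\Lambda_{k+1}\int_\Omega|\nabla\phi_{i\alpha}|^2\leq\int_\Omega(\Delta\phi_{i\alpha})^2$. Expanding both sides via the sphere identities $\sum_\alpha x_\alpha^2\equiv 1$, $\sum_\alpha|\nabla x_\alpha|^2\equiv n$, $\Delta x_\alpha=-nx_\alpha$, together with the eigenvalue equation $\Delta^2 u_i=-\Lambda_i\Delta u_i$, and summing over $\alpha$ and $i$, reproduces the chain of estimates used in \cite{wangxia07}.

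The critical deviation comes at the step where the Cauchy inequality $2rs\leq\delta r^2+s^2/\delta$ is applied. In \cite{wangxia07} this is done with a single global $\delta$, leaving behind the $\delta$-dependent term $\sum_i(\Lambda_{k+1}-\Lambda_i)^2\frac{\delta^2(\Lambda_i-(n-2))}{4(\delta\Lambda_i+n-2)}$ in \eqref{Intro3}. The novelty here is to introduce a second parameter $\eta>0$ in the bound of a separate cross-term, arranged so that the extra contributions $\frac{n-2}{\Lambda_i-(n-2)}$ generated by $\eta$ cancel the awkward middle term altogether: the $\frac{n-2}{\Lambda_i-(n-2)}$ pieces are absorbed into the left-hand side (producing the factor $2+\frac{n-2}{\Lambda_i-(n-2)}$) and into the coefficient on the right (turning $\Lambda_i$ into $\Lambda_i-\frac{n-2}{\Lambda_i-(n-2)}$). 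After this reorganisation one arrives, for every $\eta>0$, at the intermediate inequality
\[
\sum_{i=1}^k(\Lambda_{k+1}-\Lambda_i)^2\!\left(2+\frac{n-2}{\Lambda_i-(n-2)}\right)\leq\eta\sum_{i=1}^k(\Lambda_{k+1}-\Lambda_i)^2\!\left(\Lambda_i-\frac{n-2}{\Lambda_i-(n-2)}\right)+\frac{1}{\eta}\sum_{i=1}^k(\Lambda_{k+1}-\Lambda_i)\!\left(\Lambda_i+\frac{(n-2)^2}{4}\right),
\]
which is completely free of the original parameter $\delta$.

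Since the left-hand side does not depend on $\eta$, minimising the right-hand side over $\eta>0$ via the elementary inequality $\eta X+Y/\eta\geq 2\sqrt{XY}$ (attained at $\eta=\sqrt{Y/X}$) yields \eqref{Intro4} in one step.

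I expect the main obstacle to be the middle step: inserting the new parameter $\eta$ at precisely the right place in Wang--Xia's derivation so that the middle term of \eqref{Intro3} is reorganised into the $(n-2)/(\Lambda_i-(n-2))$ corrections that appear in \eqref{Intro4}, with the $\delta$-dependent factors cancelling cleanly. Once the intermediate inequality above is in place, the final AM--GM optimisation over $\eta$ completes the proof essentially immediately.
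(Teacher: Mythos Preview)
Your overall strategy --- reach the intermediate inequality
\[
\sum_{i=1}^k(\Lambda_{k+1}-\Lambda_i)^2\Bigl(2+\tfrac{n-2}{\Lambda_i-(n-2)}\Bigr)
\leq \eta\sum_{i=1}^k(\Lambda_{k+1}-\Lambda_i)^2\Bigl(\Lambda_i-\tfrac{n-2}{\Lambda_i-(n-2)}\Bigr)
+\tfrac{1}{\eta}\sum_{i=1}^k(\Lambda_{k+1}-\Lambda_i)\Bigl(\Lambda_i+\tfrac{(n-2)^2}{4}\Bigr)
\]
and then minimise over the free parameter --- is exactly what the paper does. However, two aspects of your write-up do not match the actual argument and would likely cause trouble if you tried to fill in the details as described.

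\textbf{Trial functions.} You propose the scalar trial functions $\phi_{i\alpha}=x_\alpha u_i-\sum_j a_{ij}^\alpha u_j$. The paper, following Wang--Xia and Cheng--Yang, does \emph{not} use these. It uses the Helmholtz-type decomposition of the vector field $x_\alpha\nabla u_i=\nabla h_{\alpha i}+W_{\alpha i}$, with $h_{\alpha i}\in H^2_{2,D}(\Omega)$ and $W_{\alpha i}$ orthogonal to all gradients; the trial function is $h_{\alpha i}$. The orthogonality $\|x_\alpha\nabla u_i\|^2=\|\nabla h_{\alpha i}\|^2+\|W_{\alpha i}\|^2$ is precisely what allows the $\|W_{\alpha i}\|^2$ and $\|\nabla h_{\alpha i}\|^2$ pieces to recombine into $\|x_\alpha\nabla u_i\|^2$ after the new Cauchy step, and this recombination is what produces the clean intermediate inequality. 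With the scalar trial functions $x_\alpha u_i$ you would be rederiving a different chain of estimates, and there is no guarantee the same cancellation occurs.

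\textbf{The parameter trick.} Your description --- introduce a second parameter $\eta$ alongside $\delta$ so that $\delta$ disappears --- is not quite the mechanism. The paper keeps Wang--Xia's $\delta$ throughout and instead introduces a constant $C>0$ in front of the term $\|\langle\nabla x_\alpha,\nabla u_i\rangle\|^2$. Integrating that term by parts and applying Cauchy with weight $\delta\Lambda_i+n-2$ produces, after summing over $\alpha$, a quadratic in $C$; the optimal choice is $C=\dfrac{2(\delta\Lambda_i+n-2)}{\Lambda_i-(n-2)}$ (so $C$ depends on $i$ and on $\delta$). With this $C$ inserted, the inequality collapses directly to the display above with $\eta=\delta$. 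So only one free parameter survives, but it is the original $\delta$, not a new one; the new parameter $C$ is the one that gets eliminated.

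\textbf{A missing step.} Before you can minimise $\eta X+Y/\eta$ you need $X>0$, i.e.\ $\Lambda_i-\dfrac{n-2}{\Lambda_i-(n-2)}>0$ for every $i$. The paper secures this via a short lemma showing $\Lambda_1(\Omega)\geq \Lambda_1(\mathbb{S}^n)=n$ by domain monotonicity; you should include this, since without it the final optimisation over $\eta$ is not justified.
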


\begin{corollary} Under the assumptions of Theorem 1.2,
\begin{equation}\label{Intro5}
\aligned\sum_{i=1}^k(\Lambda_{k+1}-\Lambda_i)^2\leq
\sum_{i=1}^k(\Lambda_{k+1}-\Lambda_i)\left(\Lambda_i
-\frac{n-2}{\Lambda_i-(n-2)}\right)
\left(\Lambda_i+\frac{(n-2)^2}{4}\right),
\endaligned
\end{equation}
\begin{equation}\label{Intro6}\Lambda_{k+1}\leq
S_{k+1}+\sqrt{S_{k+1}^2-T_{k+1}},
\end{equation}
\begin{equation}\label{Intro7}\Lambda_{k+1}-\Lambda_k\leq2\sqrt{S_{k+1}^2-T_{k+1}},
\end{equation}
where \begin{equation}\label{Intro8}S_{k+1}=\frac1{
k}\sum_{i=1}^k\Lambda_i+\frac{1}{
2k}\sum_{i=1}^k\left(\Lambda_i-\frac{n-2}{\Lambda_i-(n-2)}\right)
\left(\Lambda_i+\frac{(n-2)^2}{4}\right),\end{equation}
\begin{equation}\label{Intro9}T_{k+1}=\frac{1}{k}\sum_{i=1}^k\Lambda_i^2+\frac{1}{
k}\sum_{i=1}^k\Lambda_i\left(\Lambda_i-\frac{n-2}{\Lambda_i-(n-2)}\right)
\left(\Lambda_i+\frac{(n-2)^2}{4}\right).\end{equation}\end{corollary}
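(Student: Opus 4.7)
The plan is to establish \eqref{Intro5} from Theorem~\ref{th:main} by combining the trivial bound $2+\tfrac{n-2}{\Lambda_i-(n-2)}\ge 2$ with a Chebyshev-type rearrangement, and then to deduce the quadratic bounds \eqref{Intro6} and \eqref{Intro7} as algebraic consequences. For brevity I will abbreviate $g_i=\Lambda_{k+1}-\Lambda_i$, $\alpha_i=\Lambda_i-\tfrac{n-2}{\Lambda_i-(n-2)}$, $F_i=\Lambda_i+\tfrac{(n-2)^2}{4}$, $c_i=\alpha_iF_i$, and set $Y=\sum_ig_i^2\alpha_i$, $Z=\sum_ig_iF_i$, $W=\sum_ig_ic_i$, so $W$ is the right side of \eqref{Intro5}.

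For \eqref{Intro5}, I would first drop the factor $2+\tfrac{n-2}{\Lambda_i-(n-2)}\ge 2$ on the left of Theorem~\ref{th:main}, obtaining $\bigl(\sum g_i^2\bigr)^2\le YZ$. Next I would establish the identity
$$W\sum_{i=1}^kg_i^2-YZ=\frac{F_{k+1}}{2}\sum_{i,j=1}^kg_ig_j(\alpha_i-\alpha_j)(\Lambda_i-\Lambda_j),$$
which follows from the elementary computation $g_jF_i-g_iF_j=(\Lambda_i-\Lambda_j)F_{k+1}$ after symmetrizing in $i,j$. Since $\alpha(\Lambda)=\Lambda-\tfrac{n-2}{\Lambda-(n-2)}$ has positive derivative $1+\tfrac{n-2}{(\Lambda-(n-2))^2}$ on $(n-2,\infty)$, the map $\Lambda_i\mapsto\alpha_i$ is nondecreasing, so each summand in the double sum is nonnegative. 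Hence $YZ\le W\sum g_i^2$, and combined with $\bigl(\sum g_i^2\bigr)^2\le YZ$ this yields $\sum g_i^2\le W$, i.e., \eqref{Intro5}.

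Once \eqref{Intro5} is in hand, expanding the squares and products and dividing by $k$ rewrites it as the quadratic inequality $\Lambda_{k+1}^2-2S_{k+1}\Lambda_{k+1}+T_{k+1}\le 0$, with $S_{k+1}$ and $T_{k+1}$ matching \eqref{Intro8} and \eqref{Intro9}; completing the square gives \eqref{Intro6}. For \eqref{Intro7}, the same quadratic also implies $\Lambda_{k+1}\ge S_{k+1}-\sqrt{S_{k+1}^2-T_{k+1}}$, so it suffices to verify that $\Lambda_k$ lies above $S_{k+1}-\sqrt{S_{k+1}^2-T_{k+1}}$ as well, which reduces to
$$\Lambda_k(2S_{k+1}-\Lambda_k)-T_{k+1}=\frac{1}{k}\sum_{i=1}^{k-1}\bigl[(\Lambda_k-\Lambda_i)c_i-(\Lambda_k-\Lambda_i)^2\bigr]\ge 0.$$
The right-hand side is nonnegative by applying \eqref{Intro5} one index earlier (that is, with $\Lambda_{k+1}$ replaced by $\Lambda_k$ and summation over $i=1,\ldots,k-1$), giving $\Lambda_{k+1}-\Lambda_k\le 2\sqrt{S_{k+1}^2-T_{k+1}}$.

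The main obstacle is Step~1: a direct Cauchy--Schwarz or AM--GM manipulation of the product $2\sqrt{YZ}$ on the right of Theorem~\ref{th:main} does not eliminate the weight $2+\tfrac{n-2}{\Lambda_i-(n-2)}$ on the left nor collapse $YZ$ into a single sum. The monotonicity of $\alpha(\Lambda)$, together with the rearrangement identity above, is what bridges the gap and converts $YZ$ into the single-sum form appearing on the right of \eqref{Intro5}; once this is accomplished, the remaining inequalities \eqref{Intro6} and \eqref{Intro7} follow by elementary algebra and a re-application of \eqref{Intro5} at the previous level.
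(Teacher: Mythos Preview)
Your proposal is correct and follows essentially the same route as the paper: drop the factor $2+\tfrac{n-2}{\Lambda_i-(n-2)}\ge 2$ in Theorem~\ref{th:main} to obtain $(\sum g_i^2)^2\le YZ$, combine this with the Chebyshev-type bound $YZ\le W\sum g_i^2$ to get \eqref{Intro5}, then read off \eqref{Intro6} and \eqref{Intro7} as quadratic consequences (the latter by re-applying \eqref{Intro5} at level $k-1$). The only notable difference is that the paper asserts the Chebyshev step ``by induction'' while you supply the explicit symmetrization identity using $g_jF_i-g_iF_j=F_{k+1}(\Lambda_i-\Lambda_j)$ and the monotonicity of $\alpha$; this is a cleaner justification of the same inequality, not a different method.
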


\begin{remark} {\rm The inequality \eqref{Intro4} is equivalent to
$$\aligned 2\sum_{i=1}^k(\Lambda_{k+1}-\Lambda_i)^2
\leq&-\sum_{i=1}^k(\Lambda_{k+1}-\Lambda_i)^2\frac{n-2}{\Lambda_i-(n-2)}\\
&+
2\left\{\sum_{i=1}^k(\Lambda_{k+1}-\Lambda_i)^2
\left(\Lambda_i-\frac{n-2}{\Lambda_i-(n-2)}\right)\right\}^{1/2}\\
&\times\left\{\sum_{i=1}^k(\Lambda_{k+1}-\Lambda_i)
\left(\Lambda_i+\frac{(n-2)^2}{4}\right)\right\}^{1/2}.\endaligned$$
From the inequality \eqref{add} in Section 2 and making use of
Cauchy inequality, the following inequality can be deduced:
$$\aligned
-\sum_{i=1}^k&(\Lambda_{k+1}-\Lambda_i)^2\frac{n-2}{\Lambda_i-(n-2)}\\
&+
2\left\{\sum_{i=1}^k(\Lambda_{k+1}-\Lambda_i)^2\left(\Lambda_i
-\frac{n-2}{\Lambda_i-(n-2)}\right)\right\}^{1/2}\\
&\times\left\{\sum_{i=1}^k(\Lambda_{k+1}-\Lambda_i)
\left(\Lambda_i+\frac{(n-2)^2}{4}\right)\right\}^{1/2}\\
\leq&-\sum_{i=1}^k(\Lambda_{k+1}-\Lambda_i)^2\frac{n-2}{\Lambda_i-(n-2)}\\
&+
\delta\sum_{i=1}^k(\Lambda_{k+1}-\Lambda_i)^2\left(\Lambda_i
-\frac{n-2}{\Lambda_i-(n-2)}\right)\\
&+\frac{1}{\delta}\sum_{i=1}^k(\Lambda_{k+1}-\Lambda_i)
\left(\Lambda_i+\frac{(n-2)^2}{4}\right)\\
=&\sum_{i=1}^k(\Lambda_{k+1}-\Lambda_i)^2\left(\delta\Lambda_i
-\frac{(\delta+1)(n-2)}{\Lambda_i-(n-2)}\right)\\
&+
\frac{1}{\delta}\sum_{i=1}^k(\Lambda_{k+1}-\Lambda_i)
\left(\Lambda_i+\frac{(n-2)^2}{4}\right)\\
\leq&\sum_{i=1}^k(\Lambda_{k+1}-\Lambda_i)^2\left(\delta\Lambda_i
+\frac{\delta^2(\Lambda_i-(n-2))}{4(\delta\Lambda_i+n-2)}\right)\\
&+ \frac{1}{\delta}\sum_{i=1}^k(\Lambda_{k+1}
-\Lambda_i)\left(\Lambda_i+\frac{(n-2)^2}{4}\right),
\endaligned$$
which shows that the inequality \eqref{Intro4} is sharper than
inequality \eqref{Intro3}. Therefore, Theorem 1.2 improves Theorem
1.1.}\end{remark}

\section{Proof of Theorem}

By the method of constructing trial functions introduced by Cheng
and Yang in \cite{chengyang06}, for any $\alpha=1,\ldots, n+1$ and
each $i=1,\ldots,k$, the vector-valued functions $x_\alpha\nabla
u_i$ can be decomposed as
\begin{equation}\label{2sec1}x_\alpha\nabla u_i=\nabla h_{\alpha i}+W_{\alpha i},\end{equation}
where $h_{\alpha i}\in H^2_{2,D}(\Omega)$, $\nabla h_{\alpha i}$ is
the projection of $x_\alpha\nabla u_i$ in
$\mathbf{H}^2_{1,D}(\Omega)$ and $W_{\alpha i}\perp
\mathbf{H}^2_{1,D}(\Omega)$ (for the definitions of
$H^2_{2,D}(\Omega)$ and $\mathbf{H}^2_{1,D}(\Omega)$, we refer to
\cite{wangxia07}). Hence,
$$W_{\alpha i}|_{\partial\Omega}=0$$ and
$$ \int_\Omega\langle W_{\alpha i},\nabla u\rangle=0, \ \ \ \ {\rm for\ any}\ u\in  H^2_{2,D}(\Omega).$$

Define $\|f\|^2=\int_\Omega|f|^2$. Then (see inequalities (2.19) and
(2.40) in \cite{wangxia07})
\begin{equation}\label{2sec2}
\|x_\alpha\nabla u_i\|^2=\|\nabla h_{\alpha i}\|^2+\|W_{\alpha
i}\|^2
\end{equation}
and for any $\delta>0$,
\begin{alignat}{1}\label{2sec3}
\sum_{i=1}^k(\Lambda_{k+1}-\Lambda_i)^2r_{\alpha
i}\leq&\sum_{i=1}^k(\Lambda_{k+1}-\Lambda_i)^2\Big(\delta p_{\alpha
i}+(\delta\Lambda_i+n-2)\|W_{\alpha i}\|^2\\
&+\delta\|\langle\nabla x_\alpha,\nabla u_i\rangle\|^2 \Big)
+\frac{1}{\delta}\sum_{i=1}^k(\Lambda_{k+1}-\Lambda_i)\|Z_{\alpha
i}\|^2,\end{alignat}
where
$$\aligned
r_{\alpha i}=&2\|\langle\nabla x_\alpha,\nabla
u_i\rangle\|^2+\int_\Omega \langle\nabla x_\alpha^2,\Delta u_i\nabla
u_i\rangle+(n-2)\| x_\alpha\nabla u_i\|^2,\\p_{\alpha
i}=&\int_\Omega\langle\nabla x_\alpha^2,u_i\nabla (\Delta
u_i)+\Lambda_iu_i\nabla u_i\rangle,\\
Z_{\alpha i}=&\nabla\langle\nabla x_\alpha,\nabla
u_i\rangle-\frac{n-2}{ 2}x_\alpha\nabla u_i.\endaligned
$$

Next, we are to optimize the inequality (2.41) in \cite{wangxia07}.
Let $C$ be a positive constant. Then it follows from Cauchy
inequality that
\begin{alignat}{1}\label{2sec4} C\|\langle\nabla x_\alpha,\nabla
u_i\rangle\|^2=&C\int_\Omega\langle\nabla x_\alpha,\nabla
u_i\rangle^2\\
=&-C\int_\Omega  x_\alpha {\rm div}(\langle\nabla
x_\alpha,\nabla u_i\rangle\nabla u_i)\\
=&-C\int_\Omega\langle x_\alpha\nabla u_i,\nabla \langle\nabla
x_\alpha,\nabla u_i\rangle\rangle-C\int_\Omega\langle\nabla
x_\alpha,\nabla u_i\rangle x_\alpha\Delta u_i\\
=&-C\int_\Omega\langle \nabla h_{\alpha i},\nabla \langle\nabla
x_\alpha,\nabla
u_i\rangle\rangle-\frac{C}{2}\int_\Omega\langle\nabla
x_\alpha^2,\nabla u_i\rangle \Delta u_i\\
\leq&(\delta\Lambda_i+n-2)\|\nabla h_{\alpha
i}\|^2+\frac{C^2}{4(\delta\Lambda_i+n-2)}\|\nabla \langle\nabla
x_\alpha,\nabla u_i\rangle\|^2\\
&-\frac{C}{2}\int_\Omega\langle\nabla x_\alpha^2,\nabla u_i\rangle
\Delta u_i,
\end{alignat}
where ${\rm div}(Z)$ denotes the divergence of $Z$. Applying
\eqref{2sec4} to \eqref{2sec3} yields
\begin{alignat}{1}\label{2sec5}
\sum_{i=1}^k(\Lambda_{k+1}-\Lambda_i)^2r_{\alpha
i}\leq&\sum_{i=1}^k(\Lambda_{k+1}-\Lambda_i)^2\Big(\delta p_{\alpha
i}+(\delta\Lambda_i+n-2)\|W_{\alpha
i}\|^2\\&+(\delta-C)\|\langle\nabla x_\alpha,\nabla
u_i\rangle\|^2+C\|\langle\nabla x_\alpha,\nabla
u_i\rangle\|^2\Big)\\
&+\frac{1}{\delta}\sum_{i=1}^k(\Lambda_{k+1}-\Lambda_i)\|Z_{\alpha
i}\|^2\\
\leq&\sum_{i=1}^k(\Lambda_{k+1}-\Lambda_i)^2\Big(\delta p_{\alpha
i}+(\delta\Lambda_i+n-2)(\|W_{\alpha i}\|^2+\|\nabla h_{\alpha
i}\|^2)\\&+(\delta-C)\|\langle\nabla x_\alpha,\nabla
u_i\rangle\|^2+\frac{C^2}{4(\delta\Lambda_i+n-2)}\|\nabla
\langle\nabla x_\alpha,\nabla
u_i\rangle\|^2\\
&-\frac{C}{2}\int_\Omega\langle\nabla x_\alpha^2,\nabla u_i\rangle
\Delta
u_i\Big)+\frac{1}{\delta}\sum_{i=1}^k(\Lambda_{k+1}-\Lambda_i)\|Z_{\alpha
i}\|^2\\
=&\sum_{i=1}^k(\Lambda_{k+1}-\Lambda_i)^2\Big(\delta p_{\alpha
i}+(\delta\Lambda_i+n-2)\|x_\alpha\nabla
u_i\|^2\\&+(\delta-C)\|\langle\nabla x_\alpha,\nabla
u_i\rangle\|^2+\frac{C^2}{4(\delta\Lambda_i+n-2)}\|\nabla
\langle\nabla x_\alpha,\nabla
u_i\rangle\|^2\\
&-\frac{C}{2}\int_\Omega\langle\nabla x_\alpha^2,\nabla u_i\rangle
\Delta
u_i\Big)+\frac{1}{\delta}\sum_{i=1}^k(\Lambda_{k+1}-\Lambda_i)\|Z_{\alpha
i}\|^2,
\end{alignat}
where in the last step in \eqref{2sec5} we have used equality
\eqref{2sec2}. A direct calculation yields (see (2.44), (2.45),
(2.46) and (2.47) in \cite{wangxia07})
\begin{equation}\label{2sec6}
\sum_{\alpha=1}^{n+1}r_{\alpha i}=n,
\end{equation}
\begin{equation}\label{2sec7}
\sum_{\alpha=1}^{n+1}p_{\alpha i}=0,
\end{equation}
\begin{equation}\label{2sec8}
\sum_{\alpha=1}^{n+1}\|x_\alpha\nabla
u_i\|^2=\sum_{\alpha=1}^{n+1}\|\langle\nabla x_\alpha,\nabla
u_i\rangle\|^2=1,
\end{equation}
\begin{equation}\label{2sec9}
\sum_{\alpha=1}^{n+1}\|\nabla \langle\nabla x_\alpha,\nabla
u_i\rangle\|^2=\Lambda_i-(n-2)
\end{equation}
and
\begin{equation}\label{2sec10}
\sum_{\alpha=1}^{n+1}\|Z_{\alpha i}\|^2=\Lambda_i+\frac{(n-2)^2}{
4}.
\end{equation}
Therefore, summing up \eqref{2sec5} over $\alpha$ from 1 to $n+1$,
one gets
$$\aligned
n\sum_{i=1}^k(\Lambda_{k+1}-\Lambda_i)^2\leq&\sum_{i=1}^k(\Lambda_{k+1}-\Lambda_i)^2
\Big(\delta\Lambda_i+n-2+(\delta-C)\\
&+\frac{C^2}{4(\delta\Lambda_i+n-2)}(\Lambda_i-(n-2))\Big)\\
&+\frac1\delta\sum_{i=1}^k(\Lambda_{k+1}-\Lambda_i)\left(\Lambda_i+\frac{(n-2)^2}{
4}\right).
\endaligned$$
That is,
\begin{alignat}{1}\label{2sec11}
2\sum_{i=1}^k(\Lambda_{k+1}-\Lambda_i)^2\leq&\sum_{i=1}^k(\Lambda_{k+1}-\Lambda_i)^2
\left(\delta\Lambda_i+(\delta-C)\right.\\
&\left.+\frac{C^2}{
4(\delta\Lambda_i+n-2)}(\Lambda_i-(n-2))\right)\\
&
+\frac{1}{\delta}\sum_{i=1}^k(\Lambda_{k+1}-\Lambda_i)\left(\Lambda_i+\frac{(n-2)^2}{
4}\right).
\end{alignat}
Taking $$C=\frac{2(\delta\Lambda_i+n-2)}{\Lambda_i-(n-2)}$$ in
\eqref{2sec11} yields
\begin{alignat}{1}\label{2sec12}
2\sum_{i=1}^k(\Lambda_{k+1}-\Lambda_i)^2\leq&\sum_{i=1}^k(\Lambda_{k+1}-\Lambda_i)^2
\left(\delta\Lambda_i-\frac{(\delta+1)(n-2)}{\Lambda_i-(n-2)}\right)\\
&+\frac{1}{\delta}\sum_{i=1}^k(\Lambda_{k+1}-\Lambda_i)\left(\Lambda_i+\frac{(n-2)^2}{
4}\right),
\end{alignat}
and hence
\begin{alignat}{1}\label{2sec13}
&\sum_{i=1}^k(\Lambda_{k+1}-\Lambda_i)^2
\left(2+\frac{n-2}{\Lambda_i-(n-2)}\right)\\
\leq& \delta\sum_{i=1}^k(\Lambda_{k+1}-\Lambda_i)^2\left(\Lambda_i
-\frac{n-2}{\Lambda_i-(n-2)}\right)\\
&+\frac{1}{\delta}\sum_{i=1}^k(\Lambda_{k+1}-\Lambda_i)
\left(\Lambda_i+\frac{(n-2)^2}{ 4}\right).
\end{alignat}

To complete the proof of Theorem \ref{th:main}, we need the
following lemma:

\begin{lemma} Let $\Omega$ be a connected bounded domain in
$\mathbb{S}^n$. Then
\begin{equation}\label{2sec14}\Lambda_1\geq
n.\end{equation}\end{lemma}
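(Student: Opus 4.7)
The goal is to prove $\Lambda_1 \geq n$ for a bounded domain $\Omega \subset \mathbb{S}^n$. The natural starting point is the Rayleigh quotient characterization
$$\Lambda_1 = \inf_{0\neq u\in H^2_{2,D}(\Omega)} \frac{\int_\Omega (\Delta u)^2}{\int_\Omega |\nabla u|^2},$$
so it suffices to show $\int_\Omega (\Delta u)^2 \geq n \int_\Omega |\nabla u|^2$ for every admissible $u$. The plan is to obtain this inequality via the Bochner formula on $\mathbb{S}^n$, combined with a pointwise Cauchy--Schwarz bound on the Hessian.

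First I would apply the Bochner--Weitzenb\"ock identity on $\mathbb{S}^n$, which (since $\mathrm{Ric} = (n-1)g$) reads
$$\tfrac{1}{2}\Delta |\nabla u|^2 = |\nabla^2 u|^2 + \langle \nabla u, \nabla \Delta u\rangle + (n-1)|\nabla u|^2.$$
Integrating this identity over $\Omega$ and rearranging gives an expression for $\int_\Omega \langle \nabla u, \nabla \Delta u\rangle$ in terms of $\int_\Omega |\nabla^2 u|^2$, $\int_\Omega |\nabla u|^2$, and the boundary flux $\int_{\partial\Omega} \partial_\nu |\nabla u|^2$. On the other hand, integration by parts using $\partial u/\partial\nu = 0$ on $\partial\Omega$ yields $\int_\Omega (\Delta u)^2 = -\int_\Omega \langle \nabla u, \nabla \Delta u\rangle$. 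Combining these two identities produces
$$\int_\Omega (\Delta u)^2 = \int_\Omega |\nabla^2 u|^2 + (n-1)\int_\Omega |\nabla u|^2 - \tfrac{1}{2}\int_{\partial\Omega} \partial_\nu |\nabla u|^2.$$

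Next I would dispose of the boundary term by a sign argument: since both $u = 0$ and $\partial u/\partial\nu = 0$ on $\partial\Omega$, the full gradient $\nabla u$ vanishes there, so $|\nabla u|^2$ attains its minimum value $0$ on $\partial\Omega$. Consequently the outward normal derivative $\partial_\nu |\nabla u|^2 \leq 0$ pointwise on $\partial\Omega$, which makes $-\tfrac{1}{2}\int_{\partial\Omega}\partial_\nu|\nabla u|^2 \geq 0$. Dropping it gives the inequality
$$\int_\Omega (\Delta u)^2 \geq \int_\Omega |\nabla^2 u|^2 + (n-1)\int_\Omega |\nabla u|^2.$$
Finally, the pointwise Cauchy--Schwarz estimate $(\Delta u)^2 = (\mathrm{tr}\,\nabla^2 u)^2 \leq n\,|\nabla^2 u|^2$ allows me to absorb the Hessian term: replacing $\int_\Omega |\nabla^2 u|^2$ by the lower bound $\tfrac{1}{n}\int_\Omega (\Delta u)^2$ and rearranging yields $\tfrac{n-1}{n}\int_\Omega (\Delta u)^2 \geq (n-1)\int_\Omega |\nabla u|^2$, hence the desired $\int_\Omega (\Delta u)^2 \geq n\int_\Omega |\nabla u|^2$.

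The only step that demands care is the boundary term: one must verify that the Dirichlet-type conditions really imply $\nabla u|_{\partial\Omega} = 0$ (tangential derivatives vanish because $u\equiv 0$ on $\partial\Omega$, the normal derivative vanishes by hypothesis) and then exploit that $|\nabla u|^2$ achieves its global infimum on $\partial\Omega$ to get the correct sign of $\partial_\nu|\nabla u|^2$. Once this is in place, the combination with Bochner and the Hessian Cauchy--Schwarz inequality is routine, and equality in $\Lambda_1 = n$ would force equality everywhere, in particular $\nabla^2 u = \tfrac{\Delta u}{n} g$, reflecting the rigidity characteristic of the round sphere.
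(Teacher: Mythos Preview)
Your argument is correct and gives a self-contained analytic proof via the Bochner identity and the trace inequality $(\Delta u)^2\le n|\nabla^2u|^2$. The paper proceeds quite differently: it uses domain monotonicity. One extends the first eigenfunction of $\Omega$ by zero to all of $\mathbb{S}^n$; since $u$ and $\partial u/\partial\nu$ vanish on $\partial\Omega$ the extension lies in $H^2(\mathbb{S}^n)$ and is therefore an admissible trial function on the whole sphere, whence $\Lambda_1(\Omega)\ge\Lambda_1(\mathbb{S}^n)$. On the closed sphere the Rayleigh quotient $\int(\Delta u)^2/\int|\nabla u|^2$ reduces, via the spectral decomposition of $\Delta$, to the first nonzero Laplace eigenvalue $\lambda_1(\mathbb{S}^n)=n$.

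The trade-offs: the paper's monotonicity argument is shorter and more conceptual, but it requires knowing the Laplace spectrum of the full sphere as external input. Your Bochner approach is intrinsic and actually proves the stronger pointwise statement that every admissible $u$ satisfies $\int_\Omega(\Delta u)^2\ge n\int_\Omega|\nabla u|^2$; moreover it immediately generalizes to any domain in a manifold with $\mathrm{Ric}\ge(n-1)g$, yielding a Lichnerowicz-type bound $\Lambda_1\ge n$ in that broader setting, which the monotonicity argument does not give.
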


\begin{proof} Let $\Omega_1$, $\Omega_2$ be two connected bounded
domains in $\mathbb{S}^n$ and $\Omega_1\subset\Omega_2$. Let
$u_1(\Omega_1)$ be the eigenfunction corresponding to
$\Lambda_1(\Omega_1)$. Then the function defined by $$\widetilde{
u_1}=\left\{\begin{array}{ll} u_1 &\ \ \  {\rm in }\ \ \Omega_1,\\
0 &\ \ \  {\rm in }\ \ \Omega_2- \Omega_1
\end{array}\right.
$$
is a eigenfunction corresponding to $\Lambda_1(\Omega_2)$. Hence,
$\Lambda_1(\Omega_1)\geq \Lambda_1(\Omega_2)$. Denote by $\lambda_1$
the first eigenvalue of Laplacian. It is easy to see
$\Lambda_1(\mathbb{S}^n)=\lambda_1(\mathbb{S}^n)=n$ because there
are no boundary conditions in this case. It follows that
$\Lambda_1(\Omega)\geq \Lambda_1(\mathbb{S}^n)=n$ by setting
$\Omega=\Omega_1$ and $\Omega_2=\mathbb{S}^n$. This completes the
proof of Lemma 2.1.\end{proof}

Inequality \eqref{2sec14} shows that for any $i$,
\begin{equation}\label{add}\Lambda_i-\frac{n-2}{\Lambda_i-(n-2)}>0.\end{equation}
Minimizing the right hand side of \eqref{2sec13} as a function of
$\delta$ by choosing
$$\delta=\left(\frac{\sum_{i=1}^k(\Lambda_{k+1}-\Lambda_i)
\left(\Lambda_i+\frac{(n-2)^2}{
4}\right)}{\sum_{i=1}^k(\Lambda_{k+1}-\Lambda_i)^2
\left(\Lambda_i-\frac{n-2}{\Lambda_i-(n-2)}\right)}\right)^{1/2},$$
we obtain the inequality \eqref{Intro4}, completing the proof of
Theorem \ref{th:main}.

\vspace*{3mm}

{\bf Proof of Corollary 1.3.} It is easy to see from \eqref{Intro4}
that
\begin{alignat}{1}\label{2sec15}
\sum_{i=1}^k(\Lambda_{k+1}-\Lambda_i)^2\leq&
\left\{\sum_{i=1}^k(\Lambda_{k+1}-\Lambda_i)^2
\left(\Lambda_i-\frac{n-2}{\Lambda_i-(n-2)}\right)\right\}^{1/2}\\
&\qquad\qquad\times\left\{\sum_{i=1}^k(\Lambda_{k+1}-\Lambda_i)
\left(\Lambda_i+\frac{(n-2)^2}{4}\right)\right\}^{1/2}.
\end{alignat}
One can check by induction that
$$\aligned
&\left\{\sum_{i=1}^k(\Lambda_{k+1}-\Lambda_i)^2
\left(\Lambda_i-\frac{n-2}{\Lambda_i-(n-2)}\right)\right\}
\left\{\sum_{i=1}^k(\Lambda_{k+1}-\Lambda_i)
\left(\Lambda_i+\frac{(n-2)^2}{
4}\right)\right\}\\
\leq&\left\{\sum_{i=1}^k(\Lambda_{k+1}-\Lambda_i)^2\right\}
\left\{\sum_{i=1}^k(\Lambda_{k+1}-\Lambda_i)
\left(\Lambda_i-\frac{n-2}{\Lambda_i-(n-2)}\right)\left(\Lambda_i+\frac{(n-2)^2}{
4}\right)\right\},
\endaligned$$
which together with inequality \eqref{2sec15} yields inequality
\eqref{Intro5}.

Solving the quadratic polynomial of $\Lambda_{k+1}$ in
\eqref{Intro5}, we obtain inequality \eqref{Intro6}.

Replacing $k+1$ with $k$ in \eqref{Intro5}, we obtain
$$\sum_{i=1}^{k-1}(\Lambda_k-\Lambda_i)^2\leq
\sum_{i=1}^{k-1}(\Lambda_k-\Lambda_i)\left(\Lambda_i-\frac{n-2}{\Lambda_i-(n-2)}\right)
\left(\Lambda_i+\frac{(n-2)^2}{4}\right).$$ Therefore,
$$\sum_{i=1}^k(\Lambda_k-\Lambda_i)^2\leq
\sum_{i=1}^k(\Lambda_k-\Lambda_i)\left(\Lambda_i-\frac{n-2}{\Lambda_i-(n-2)}\right)
\left(\Lambda_i+\frac{(n-2)^2}{4}\right).$$ Solving this inequality
of quadratic polynomial for $\Lambda_{k}$, we infer
\begin{equation}\label{2sec16}\Lambda_k\geq
S_{k+1}-\sqrt{S_{k+1}^2-T_{k+1}},\end{equation} where $S_{k+1},
T_{k+1}$ are given by \eqref{Intro8} and \eqref{Intro9}
respectively. Therefore, the inequality \eqref{Intro7} follows from
\eqref{2sec16} and \eqref{Intro6}. Then Corollary 1.3 is completed.

\end{document}